\def\titlerunning#1{\gdef\titrun{#1}}
\def\author#1{\gdef\autrun{\def\and{\unskip, }#1}\gdef\@author{#1}}
\def\email#1{E-mail:\href{mailto:#1}{#1}}
\def\subjclass#1{\par\bigskip\noindent\textsl{Mathematics Subject Classification 2020.} #1}
\def\keywords#1{\par\smallskip\noindent\textsl{Keywords and Phrases.} #1}
\newenvironment{dedication}{\itshape\center}{\par\medskip}
\newtheorem{thm}{Theorem}[section]
\newtheorem{exm}[thm]{Example}
\newtheorem{lem}[thm]{Lemma}
\theoremstyle{definition}
\numberwithin{equation}{section}
\newcommand{\Q}{\mathbb{Q}}
\newcommand{\R}{\mathbb{R}}
\begin{document}

\titlerunning{A new proof of Legendre's theorem on the Diophantine equation $ax^2+by^2+cz^2=0$}

\title{\textbf{A new proof of Legendre's theorem on the Diophantine equation $ax^2+by^2+cz^2=0$}}

\author{Jingbo Liu and Bruce McOsker}

\date{}

\maketitle

\begin{dedication}
Department of Mathematical, Physical, and Engineering Sciences,\\ Texas A\&M University-San Antonio, San Antonio, Texas, USA\\
\email{\,jliu@tamusa.edu} (Jingbo Liu; faculty advisor)\\ \email{\,mcoskerbruce@gmail.com} (Bruce McOsker; undergraduate student)
\end{dedication}


\subjclass{11D09, 11E20, 14G05, 14G12.}

\keywords{Diophantine equation, Legendre's theorem, Hasse-Minkowski principle, Hasse invariant, Jacobi symbol, rational points on conics.}

\begin{abstract}
One of Legendre's theorems on the Diophantine equation $ax^2+by^2+cz^2=0$ provides necessary and sufficient conditions on the existence of nonzero rational solutions of this equation, which helps determine the existence of rational points on a conic.
In this paper, we give a new proof of this famous theorem using Hasse invariant and Jacobi symbol from the theory of quadratic forms.
\end{abstract}


\section{Introduction}\label{Sec:Int}
In algebraic geometry, a conic defined by a quadratic polynomial equation
\begin{equation}\label{E1}
AX^2+BXY+CY^2+DX+EY+F=0,
\end{equation}
with $A,B,C,D,E,F$ rational numbers, has either no or infinitely many rational points on it.
Once a rational point on this conic is found, all the other rational points on it can be described using slope parametrization.
So, the key issue is how to find one rational point, or at least how to determine if there is any rational point, on the conic.

Replacing $X$ by $\frac{X}{Z}$ and $Y$ by $\frac{Y}{Z}$ in equation \eqref{E1}, one has a homogenous quadratic equation
\begin{equation}\label{E2}
AX^2+BXY+CY^2+DXZ+EYZ+FZ^2=0;
\end{equation}
moreover, one knows from linear algebra that the quadratic form $AX^2+BXY+CY^2+DXZ+EYZ+FZ^2$ in equation \eqref{E2} can be transformed to a diagonal quadratic form $ax^2+by^2+cz^2$ by replacing $X,Y,Z$ by linear forms of $x,y,z$ with rational coefficients.
Hence, the conic \eqref{E1} has a rational point if and only if the quadratic equation
\begin{equation}\label{E3}
ax^2+by^2+cz^2=0
\end{equation}
has a nonzero solution in $\Q^3$.

It is natural to assume $a,b,c$ in equation \eqref{E3} are nonzero square-free integers with $(a,b,c)=1$.
Assume further $a,b,c$ are pairwise relatively prime, since if, for instance, $a,b$ have a common divisor $d$, then $ax^2+by^2+cz^2=0$ has a nonzero solution if and only if $(a/d)x^2+(b/d)y^2+cdz^2=0$ has one.

Now, for the existence of a nonzero solution of equation \eqref{E3}, Legendre proved a renowned result in 1785 as follows.

\begin{thm}\label{LegendreTheorem}
Let $a,b,c$ be nonzero square-free integers such that $(a,b)=(b,c)=(c,a)=1$.
Then, the Diophantine equation \eqref{E3} has a nonzero solution in $\Q^3$ if and only if one has
\vskip 2pt
\noindent{\bf(A)} $a,b,c$ do not have the same sign;
\vskip 2pt
\noindent{\bf(B)} $-bc,-ca,-ab$ are quadratic residues modulo $a,b,c$, respectively.
\end{thm}

Due to its close relation with rational points on conics, this theorem has piqued the interest of many distinguished mathematicians for centuries, including C.F. Gauss and P.G.L. Dirichlet, who either reproved it using new techniques, or found an upper bound for its small nonzero solutions, or designed algorithms for calculating all its solutions in certain cases, or studied analogous problems on other number fields.
There is a large literature on this topic, and some details can be found, for example, in the monographs \cite[Pages 89-92]{Cassels}, \cite{Dirichlet}, \cite[Pages 294-298]{Gauss}, \cite[Chapter 5]{Grosswald} as well as \cite[Chapter IV and Appendix I]{Weil}, in the work \cite{CochraneMitchell}, \cite{CremonaRusin}, \cite{DavenportHall}, \cite{Holzer}, \cite{HudsonWilliams}, \cite{Kneser}, \cite{Leal-Ruperto},
\cite{Mordell1,Mordell2}, \cite{Samet}, \cite{Simon}, \cite{Skolem},
\cite{Watkins} and \cite{Williams}, and in their references (as far as the first author could verify).

In this paper, we provide another proof of this classical result using methods from the modern theory of quadratic forms.
It is easy to see the necessity of conditions {\bf(A)}-{\bf(B)}, but it is nontrivial to get the sufficiency of these conditions.
For our approach, we shall prove this theorem through the Hasse-Minkowski principle.
It is noteworthy that although Hasse invariant is often used to determine the existence of nonzero solutions of some given quadratic form over $\Q$, we nevertheless apply a relation between Hilbert symbol and Jacobi symbol to work out the case $p=2$ independently without resorting to Hilbert’s Reciprocity Law.
This work is a directed undergraduate research outcome and accessible to students of some basic knowledge in number theory.

\section{Hasse invariant and Jacobi symbol}\label{Sec:Hasse&Jacobi}
For a given ternary quadratic form $f(x,y,z)=ax^2+by^2+cz^2$, the renowned Hasse-Minkowski principle says that the Diophantine equation $f(x,y,z)=0$ has a nonzero solution in $\Q^3$ if and only if it has a nonzero solution in $\R^3$ and $\Q_p^3$ for every prime $p$, where $\Q_p$ denotes the completion of $\Q$ with respect to the $p$-adic valuation.
Hence, it is sufficient to prove that, under the conditions {\bf(A)}-{\bf(B)} of Theorem \ref{LegendreTheorem}, $f(x,y,z)=0$ has a nonzero solution locally everywhere.

In this section, we use Hasse invariant to determine if $f(x,y,z)=0$ has a nonzero solution in $\Q_p^3$.
Let $m,n$ be two nonzero integers; for each prime $p$, the Hilbert symbol $(m,n)_p$ is defined as $1$ if $mx^2+ny^2=1$ has a solution $(x,y)\in\Q_p^2$ and $-1$ otherwise.
Accordingly, the Hasse invariant for $f$ at $p$ is defined to be
\begin{equation}\label{Hasse}
S_p(f):=(a,-1)_p(b,-1)_p(c,-1)_p(a,b)_p(b,c)_p(c,a)_p.
\end{equation}
Similarly, one can define the Hilbert symbol $(m,n)_\infty$ and the Hasse invariant $s_\infty(f)$ in $\mathbb{R}$; in particular, $(m,n)_\infty=1$ if either $m$ or $n$ is positive and $(m,n)_\infty=-1$ otherwise.
Recall that Hilbert's reciprocity law says $(m,n)_\infty\cdot\prod_p(m,n)_p=1$, which further leads to $s_\infty(f)\cdot\prod_ps_p(f)=1$.

It is well-known $f(x,y,z)=0$ has a nonzero solution in $\Q_p^3$ if and only if $S_p(f)=(-1,-1)_p$; that is, $s_p(f)=-1$ if $p=2$ and $s_p(f)=1$ otherwise.
Some properties of the Hilbert symbols (listed as the first three lemmata without proofs below) enable us to calculate the values of them.
Interested reader can consult the classical monograph \cite[\S63B and \S73]{OMeara} for details.

\begin{lem}\label{L1}
Let $k,m,n$ be nonzero integers.
Then, one has
\vskip 2pt
\noindent{\sc(1-a)} $(m,n)_p=(n,m)_p$ and $\big(m,n^2\big)_p=1$,
\vskip 2pt
\noindent{\sc(1-b)} $(m,m)_p=(m,-1)_p$,
\vskip 2pt
\noindent{\sc(1-c)} $(k,mn)_p=(k,m)_p(k,n)_p$.
\end{lem}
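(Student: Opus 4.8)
The plan is to argue directly from the definition of the Hilbert symbol, deducing the three identities from the fact that the values $(m,n)_p$ are controlled by a norm map of a quadratic extension of $\Q_p$. The symmetry $(m,n)_p=(n,m)_p$ in (1-a) is immediate, since $mx^2+ny^2=1$ and $ny^2+mx^2=1$ are the same equation; and $(m,n^2)_p=1$ is witnessed by the solution $(x,y)=(0,n^{-1})$, which is legitimate because $n\in\Z^\times\subseteq\Q_p^\times$. The substance lies in (1-b) and (1-c), and for these I would begin by recording the standard reformulation
\[
(m,n)_p=1\ \Longleftrightarrow\ \text{the form }mx^2+ny^2-z^2\text{ has a nontrivial zero over }\Q_p:
\]
a solution of $mx^2+ny^2=1$ yields the zero $(x,y,1)$; any zero with $z\neq0$ rescales to a solution of the affine equation; and a zero with $z=0$ forces $-mn\in(\Q_p^\times)^2$, in which case $mx^2+ny^2$ is a nonzero product of two $\Q_p$-linear forms and already represents $1$.

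With this in hand, fix the slot occupied by $n$. If $n\in(\Q_p^\times)^2$ then $(m,n)_p=1$ for every $m$ and all three identities are trivial, so assume $n$ is a nonsquare. Then $L:=\Q_p(\sqrt n)$ is a quadratic field extension, and on $L=\Q_p\oplus\Q_p\sqrt n$ the norm form $N_{L/\Q_p}$ is exactly $z^2-ny^2$; combined with the reformulation above this gives
\[
(m,n)_p=1\ \Longleftrightarrow\ m\in N_n,\qquad N_n:=N_{L/\Q_p}(L^\times)\subseteq\Q_p^\times.
\]
Because $N_n$ is the image of a group homomorphism it is a subgroup of $\Q_p^\times$, and --- granting the standard fact that it has index $2$ (discussed below) --- the map $m\mapsto(m,n)_p$ is the nontrivial character of $\Q_p^\times$ with kernel $N_n$; in particular it is a homomorphism into $\{\pm1\}$, which read off with $k$ in the fixed slot is exactly $(k,mn)_p=(k,m)_p(k,n)_p$, i.e. (1-c) (after invoking the symmetry already proved). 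Identity (1-b) then follows formally: the form $mx^2-my^2-z^2$ has the nontrivial zero $(1,1,0)$, so $(m,-m)_p=1$, hence $1=(m,-m)_p=(m,-1)_p(m,m)_p$ by (1-c), and since both factors lie in $\{\pm1\}$ we conclude $(m,m)_p=(m,-1)_p$.

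The one point I have deferred --- that for every nonsquare $n$ the norm group $N_n$ has index $2$ in $\Q_p^\times$ --- is the genuine obstacle, and it is precisely what the authors avoid by citing \cite[\S63]{OMeara}. I would prove it from the structure $\Q_p^\times\cong\Z\times\Z_p^\times$ together with the local square theorem, which presents $\Q_p^\times/(\Q_p^\times)^2$ as a group of order $4$ for odd $p$ (representatives $1,u,p,up$, with $u$ a nonsquare unit) and of order $8$ for $p=2$; one then computes $N_n$ for each square class of $n$, treating the unramified extension separately from the ramified ones. For odd $p$ this is a one-line character computation; for $p=2$ it becomes a somewhat delicate enumeration over the seven quadratic extensions, and it is there, if anywhere, that the argument turns laborious. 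If one preferred to bypass norm groups altogether, the same tabulation over square classes can instead be used to verify (1-b) and (1-c) directly, case by case --- entirely elementary, but longer.
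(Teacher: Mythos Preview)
The paper does not actually prove this lemma: it is stated among the ``properties \dots listed without proofs,'' with the reader referred to O'Meara \cite[\S63B and \S73]{OMeara}. So there is no in-paper argument to compare against, and your proposal supplies strictly more than the paper does.

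That said, your sketch is correct and is essentially the standard route one finds in the cited reference (or in Serre's \emph{A Course in Arithmetic}): reinterpret $(m,n)_p$ via isotropy of $mx^2+ny^2-z^2$, recognize this as the norm condition $m\in N_{L/\Q_p}(L^\times)$ for $L=\Q_p(\sqrt n)$, and then use that the norm group has index $2$ to deduce bimultiplicativity. Your derivation of (1-b) from $(m,-m)_p=1$ together with (1-c) is clean, and your handling of the $z=0$ case in the ternary reformulation (isotropic binary forms are universal) is the right justification. You are also honest that the index-$2$ statement is where the actual local work sits; this is exactly why the paper outsources the lemma. The alternative you mention at the end --- tabulating over square classes of $\Q_p^\times$ --- is precisely how one would make the argument self-contained at an elementary level, and in fact the explicit formulas in the paper's next two lemmas (for odd $p$ and for $p=2$) amount to that tabulation, from which (1-b) and (1-c) can be read off directly without ever invoking norm groups.
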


\begin{lem}\label{L2}
Let $p\geq3$ be a prime, and let $m,n$ be integers with $(m,p)=(n,p)=1$.
Then, one has
\vskip 2pt
\noindent{\sc(2-a)} $(m,n)_p=1$,
\vskip 2pt
\noindent{\sc(2-b)} $(m,p)_p=\Big(\displaystyle\frac{m}{p}\Big)$.
\vskip 2pt
\noindent Here, $\Big(\displaystyle\frac{m}{p}\Big)$ denotes the Legendre symbol defined as $1$ if $m$ is a quadratic residue modulo $p$ and $-1$ otherwise; note $\Big(\displaystyle\frac{m}{p}\Big)=0$ if $p|m$.
\end{lem}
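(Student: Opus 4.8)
The plan is to prove both statements by reducing the solvability of the associated conic over $\Q_p$ to a congruence modulo $p$ and then lifting via Hensel's lemma; the hypothesis $p\geq 3$ enters precisely because it makes $2$ a $p$-adic unit, so that the relevant derivatives do not vanish. For {\sf(2-a)} I would first produce a solution of $mx^2+ny^2\equiv 1\pmod p$ by counting: as $x$ runs through the residues mod $p$ the value $mx^2$ lies in a set of size $(p+1)/2$, and $1-ny^2$ likewise lies in a set of size $(p+1)/2$; since $(p+1)/2+(p+1)/2>p$, these sets must meet, giving $x_0,y_0$ with $mx_0^2+ny_0^2\equiv 1\pmod p$. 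They cannot both vanish mod $p$, so assume $x_0\not\equiv 0$, and apply Hensel's lemma to $F(X)=mX^2+ny_0^2-1$: since $F'(x_0)=2mx_0$ is a $p$-adic unit, $x_0$ lifts to a root in $\Z_p$, yielding a solution of $mx^2+ny^2=1$ in $\Q_p$ and hence $(m,n)_p=1$.

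For {\sf(2-b)} I would argue by cases on $\big(\frac{m}{p}\big)$. If $m$ is a quadratic residue mod $p$, then Hensel's lemma applied to $X^2-m$ (whose derivative at a residue root is a unit) shows $m=u^2$ for some $u\in\Q_p^\times$, so $(x,y)=(u^{-1},0)$ solves $mx^2+py^2=1$ and $(m,p)_p=1$. If $m$ is a non-residue, suppose for contradiction that $mx^2+py^2=1$ has a solution in $\Q_p$. The cases $x=0$ or $y=0$ are ruled out immediately (the first by parity of $v_p(py^2)$, the second because it would make $m$ a square mod $p$), so $x,y\neq 0$; then $v_p(mx^2)$ is even while $v_p(py^2)$ is odd, so they cannot cancel and $v_p(mx^2+py^2)=\min\{v_p(mx^2),v_p(py^2)\}$. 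For this to equal $0$ we must have $v_p(x)=0$ and $v_p(y)\geq 0$, and reducing the equation modulo $p$ then gives $mx^2\equiv 1\pmod p$ with $x$ a unit, i.e. $m$ is a residue — a contradiction. Hence $(m,p)_p=-1$, which agrees with $\big(\frac{m}{p}\big)$.

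The only step that requires any care is the pigeonhole argument in {\sf(2-a)} together with the attendant check that the Hensel lift genuinely applies (non-vanishing derivative, which is exactly where $p$ odd is used); the remaining arguments are routine bookkeeping with $p$-adic valuations and quadratic residues. Alternatively one could derive both parts from the description of $\Q_p^\times/(\Q_p^\times)^2$ for odd $p$ as a Klein four-group generated by a non-residue unit and $p$, together with the bimultiplicativity of the Hilbert symbol recorded in the previous lemma; but the self-contained version above is short and stays within the elementary framework the paper aims for.
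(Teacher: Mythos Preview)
Your proof is correct. The paper, however, does not prove this lemma at all: immediately before the three lemmas on Hilbert symbols it states that ``the following properties \ldots\ are listed without proofs'' and refers the reader to O'Meara \cite[\S63B and \S73]{OMeara}. So there is no proof in the paper to compare against.

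Your argument is the standard elementary one and is entirely appropriate for the intended audience. The pigeonhole count in {\sf(2-a)} is right (both $\{mx^2\}$ and $\{1-ny^2\}$ have $(p+1)/2$ elements in $\mathbb F_p$), and the Hensel lift goes through because $p$ is odd. In {\sf(2-b)} the valuation bookkeeping is sound: since $v_p(mx^2)$ is even and $v_p(py^2)$ is odd, the minimum being $0$ forces $v_p(x)=0$ and $v_p(y)\ge 0$, and then reduction mod $p$ gives the contradiction. The alternative you mention via $\Q_p^\times/(\Q_p^\times)^2$ is essentially how O'Meara organizes things, so your self-contained version is a genuine addition relative to what the paper provides.
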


\begin{lem}\label{L3}
Let $m,n$ be odd integers.
Then, for $p=2$, one has
\vskip 2pt
\noindent{\sc(3-a)} $(m,n)_2=\displaystyle(-1)^{\frac{(m-1)(n-1)}{4}}=\biggl\{\begin{array}{ll}
-1&\text{if}\,\,m\equiv n\equiv3~(\bmod~4)\medskip\\
1&\text{otherwise}\end{array},$
\vskip 2pt
\noindent{\sc(3-b)} $(m,2)_2=\displaystyle(-1)^{\frac{(m-1)(m+1)}{8}}
=\biggl\{\begin{array}{ll}
-1&\text{if}\,\,m\equiv3,5~(\bmod~8)\medskip\\
1&\text{otherwise}\end{array}.$
\end{lem}

Let $n$ be a positive odd integer with prime factorization $n=p_1^{\alpha_1}p_2^{\alpha_2}\cdots p_k^{\alpha_k}$.
For every integer $a$, the Jacobi symbol $\Big(\displaystyle\frac{a}{n}\Big)$ is defined to be the product of the associated Legendre symbols
\begin{equation}\label{Jacobi}
\Big(\frac{a}{n}\Big):=\Big(\frac{a}{p_1}\Big)^{\alpha_1}\Big(\frac{a}{p_2}\Big)^{\alpha_2}\cdots\Big(\frac{a}{p_k}\Big)^{\alpha_k}.
\end{equation}

We list some well-known properties of the Jacobi symbols below.
Interested reader can consult the classical monograph \cite[Pages 56-57]{IrelandRosen} for details.
\begin{lem}\label{L4}
Let $a,b$ be integers, and let $m,n$ be positive odd integers with $(m,n)=1$.
Then, one has
\vskip 2pt
\noindent{\sc(4-a)} $\displaystyle\Big(\frac{a}{m}\Big)=\pm1$ if $(a,m)=1$ and $\displaystyle\Big(\frac{a}{m}\Big)=0$ otherwise,
\vskip 2pt
\noindent{\sc(4-b)} $a\equiv b~(\bmod~m)$ leads to $\displaystyle\Big(\frac{a}{m}\Big)=\Big(\frac{b}{m}\Big)$,
\vskip 2pt
\noindent{\sc(4-c)} $\displaystyle\Big(\frac{ab}{m}\Big)=\Big(\frac{a}{m}\Big)\Big(\frac{b}{m}\Big)$ and $\displaystyle\Big(\frac{a}{mn}\Big)=\Big(\frac{a}{m}\Big)\Big(\frac{a}{n}\Big)$,
\vskip 2pt
\noindent{\sc(4-d)} $\displaystyle\Big(\frac{-1}{m}\Big)=(-1)^{\frac{(m-1)}{2}}$ and $\displaystyle\Big(\frac{2}{m}\Big)=(-1)^{\frac{(m-1)(m+1)}{8}}$,
\vskip 2pt
\noindent{\sc(4-e)} $\displaystyle\Big(\frac{m}{n}\Big)\displaystyle\Big(\frac{n}{m}\Big)=(-1)^{\frac{(m-1)(n-1)}{4}}$.
\end{lem}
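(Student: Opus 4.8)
The plan is to derive all five items directly from the defining formula \eqref{Jacobi} together with the classical facts about the Legendre symbol: its multiplicativity in the numerator, the two supplements $\big(\frac{-1}{p}\big)=(-1)^{(p-1)/2}$ and $\big(\frac{2}{p}\big)=(-1)^{(p^2-1)/8}$, and quadratic reciprocity itself. Throughout I write $n=p_1^{\alpha_1}\cdots p_k^{\alpha_k}$, and for {\sf(4-e)} also $m=q_1^{\beta_1}\cdots q_\ell^{\beta_\ell}$.

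Items {\sf(4-a)}, {\sf(4-b)}, {\sf(4-c)} are formal consequences of \eqref{Jacobi}. For {\sf(4-a)}, each factor $\big(\frac{a}{p_i}\big)$ is $\pm1$ when $p_i\nmid a$ and $0$ when $p_i\mid a$, while $(a,n)=1$ is exactly the condition $p_i\nmid a$ for every $i$; hence the product is $\pm1$ in that case and $0$ otherwise. For {\sf(4-b)}, $a\equiv b\pmod n$ forces $a\equiv b\pmod{p_i}$ for each $i$, so $\big(\frac{a}{p_i}\big)=\big(\frac{b}{p_i}\big)$ and multiplying the factors gives $\big(\frac{a}{n}\big)=\big(\frac{b}{n}\big)$. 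For {\sf(4-c)}, the first identity follows by applying $\big(\frac{ab}{p_i}\big)=\big(\frac{a}{p_i}\big)\big(\frac{b}{p_i}\big)$ factor-by-factor, and the second because $(m,n)=1$ makes the prime factorization of $mn$ the concatenation of those of $m$ and $n$, so the product in \eqref{Jacobi} splits accordingly.

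The real content lies in {\sf(4-d)} and {\sf(4-e)}, and the key step is the elementary observation that for odd integers $u,v$,
\[
\frac{uv-1}{2}\equiv\frac{u-1}{2}+\frac{v-1}{2}\pmod{2}\quad\text{and}\quad\frac{u^2v^2-1}{8}\equiv\frac{u^2-1}{8}+\frac{v^2-1}{8}\pmod{2}.
\]
The first holds since $uv-1=(u-1)(v-1)+(u-1)+(v-1)$ with $4\mid(u-1)(v-1)$; the second since $u^2v^2-1=(u^2-1)(v^2-1)+(u^2-1)+(v^2-1)$ while $8\mid u^2-1$ for odd $u$, so $64\mid(u^2-1)(v^2-1)$. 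Iterating the first congruence over the prime factors of $n$ counted with multiplicity gives $\frac{n-1}{2}\equiv\sum_i\alpha_i\frac{p_i-1}{2}\pmod{2}$, and combining this with the first supplement yields $\big(\frac{-1}{n}\big)=\prod_i(-1)^{\alpha_i(p_i-1)/2}=(-1)^{(n-1)/2}$; the second congruence together with the second supplement handles $\big(\frac{2}{n}\big)=(-1)^{(n^2-1)/8}$ in exactly the same fashion.

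For {\sf(4-e)}, expand
\[
\Big(\frac{m}{n}\Big)\Big(\frac{n}{m}\Big)=\prod_{i,j}\Big(\frac{q_j}{p_i}\Big)^{\alpha_i\beta_j}\Big(\frac{p_i}{q_j}\Big)^{\alpha_i\beta_j},
\]
apply quadratic reciprocity $\big(\frac{q_j}{p_i}\big)\big(\frac{p_i}{q_j}\big)=(-1)^{\frac{p_i-1}{2}\frac{q_j-1}{2}}$ to each pair, and collect exponents: the resulting exponent $\sum_{i,j}\alpha_i\beta_j\frac{p_i-1}{2}\frac{q_j-1}{2}$ factors as $\big(\sum_i\alpha_i\frac{p_i-1}{2}\big)\big(\sum_j\beta_j\frac{q_j-1}{2}\big)$, which by the first congruence above is $\equiv\frac{n-1}{2}\cdot\frac{m-1}{2}\pmod{2}$, giving {\sf(4-e)}. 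The only delicate point in the whole argument is the mod-$2$ bookkeeping of exponents in {\sf(4-d)} and {\sf(4-e)}, and in particular carrying the multiplicities $\alpha_i,\beta_j$ correctly through the iterations; but since everything rests on the two displayed congruences for odd integers, no genuine obstacle arises.
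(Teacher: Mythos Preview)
Your argument is correct and is in fact the standard textbook derivation of the Jacobi symbol properties from the corresponding Legendre symbol facts. Note, however, that the paper does not supply any proof of this lemma: it simply presents {\sf(4-a)}--{\sf(4-e)} as ``elementary and well-known properties of the Jacobi symbol'' and moves on. So there is nothing to compare against; you have filled in what the paper deliberately left to the reader, and done so cleanly.
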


\section{Proof of Theorem \ref{LegendreTheorem}}\label{Sec:Proof}
In this section, we reprove Theorem \ref{LegendreTheorem}.
Let $f(x,y,z)=ax^2+by^2+cz^2$ be the ternary quadratic form in equation \eqref{E3}.
By the Hasse-Minkowski principle, it suffices to check that the Diophantine equation $f(x,y,z)=0$ has a nonzero solution in $\Q_p^3$ and $\R^3$ when the coefficients $a,b,c$ satisfy the conditions {\bf(A)}-{\bf(B)}.

\begin{proof}
The proof of necessity is trivial.
Actually, recall $a,b,c$ are nonzero, square-free, and pairwise relatively prime.
First, $a,b,c$ must be of different signs, as otherwise one sees either $f>0$ or $f<0$ for each nonzero triple $(x,y,z)$ in $\Q^3$.
Next, assume that the equation $f(x,y,z)=0$ has a nonzero solution $(x_0,y_0,z_0)\in\Q^3$, and choose $x_0,y_0,z_0$ suitably so that they are pairwise relatively prime.
Note that $c,x_0$ are relatively prime.
Suppose on the contrary $c,x_0$ have a common prime divisor $d$; then, $d$ divides $by_0^2$, a contradiction in view of $(b,c)=1$ or $(x_0, y_0)=1$.
Thus, $ax_0^2+by_0^2+cz_0^2=0$ leads to $-ab\equiv\big(by_0x_0^{-1}\big)^2~(\bmod~c)$, with $x_0^{-1}$ being the inverse of $x_0$ modulo $c$.
Similarly, one sees $-bc,-ca$ are quadratic residues modulo $a,b$, respectively.

Now, we prove the sufficiency.
Condition {\bf(A)} certainly guarantees the existence of a nonzero solution in $\R^3$.
Besides, if $p$ is odd and does not divide $abc$, then it is easy to see $S_p(f)=1=(-1,-1)_p$ which further shows equation \eqref{E3} has a nonzero solution in $\Q_p$.
Therefore, we only need to verify the case where $p=2$ and the cases where $p$ is odd but $p$ divides exactly one of $a,b,c$ (as they are pairwise relatively prime).

First, consider the cases where $p$ is odd.
Without loss of generality, assume that $p$ divides $a$ but not $b,c$.
Write $a=\tilde{a}p$ with $(\tilde{a},p)=1$ seeing $a$ is square-free.
Then, by Lemmata \ref{L1}-\ref{L2}, one has
\begin{equation}\label{Hasse*}
\begin{split}
s_p(f)&=(a,-1)_p(b,-1)_p(c,-1)_p(a,b)_p(b,c)_p(c,a)_p\\
      &=(a,-bc)_p=(\tilde{a},-bc)_p(p,-bc)_p\\
      &=(p,-bc)_p=1=(-1,-1)_p
\end{split}
\end{equation}
as condition {\bf(B)} implies that $-bc$ is a quadratic residue of $p$.
Therefore, $s_p(f)=1=(-1,-1)_p$ when $p$ is odd.

When $p=2$, we observe $s_2(f)=(-1,-1)_2$ immediately via Hilbert's reciprocity law and the fact $s_\infty(f)=-1=(-1,-1)_\infty$.
Here, however, we would like to introduce another presumably new approach based on the relation between Hilbert symbol and Jacobi symbol.
We have the identities
\begin{equation*}
(m,-1)_2=\Big(\frac{-1}{m}\Big),\,\,(m,2)_2=\Big(\frac{2}{m}\Big)\,\,\text{and}\,\,(m,n)_2=\Big(\frac{m}{n}\Big)\Big(\frac{n}{m}\Big)
\end{equation*}
for coprime positive odd integers $m,n$, which follow from Lemmata \ref{L3}-\ref{L4}.

We will only consider the case where $a<0,b<0$ yet $c>0$, because the proofs of the other cases are similar.
Note, however, slightly different congruence relations will be derived when $a,b,c$ have different signs.

When $a,b,c$ are all odd, by condition {\bf(B)}, there are integers $u,v,w$ such that
\begin{equation*}
-ab\equiv u^2~(\bmod~c),\,-bc\equiv v^2~(\bmod-a)\,\,\text{and}\,-ca\equiv w^2~(\bmod-b).
\end{equation*}
These congruence relations imply that
\begin{equation*}
\begin{split}
&\Big(\frac{-1}{c}\Big)\Big(\frac{-a}{c}\Big)\Big(\frac{-b}{c}\Big)=\Big(\frac{u^2}{c}\Big)=1,\\
&\Big(\frac{-b}{-a}\Big)\Big(\frac{c}{-a}\Big)=\Big(\frac{v^2}{-a}\Big)=1\\
&\,\text{and}\,\,\Big(\frac{-a}{-b}\Big)\Big(\frac{c}{-b}\Big)=\Big(\frac{w^2}{-b}\Big)=1.
\end{split}
\end{equation*}
Multiplying both sides of these three equations, it follows that
\begin{equation*}
\Big(\frac{-1}{c}\Big)\Big(\frac{-a}{-b}\Big)\Big(\frac{-b}{-a}\Big)\Big(\frac{-a}{c}\Big)\Big(\frac{c}{-a}\Big)\Big(\frac{-b}{c}\Big)\Big(\frac{c}{-b}\Big)=1,
\end{equation*}
which leads to
\begin{equation*}
\begin{split}
1=&\,(c, -1)_2(-a,-b)_2(-a,c)_2(-b,c)_2\\
 =&\,(c, -1)_2(-1, -1)_2(a,-1)_2(b,-1)_2(a,b)_2(a,c)_2(-1,c)_2(b,c)_2(-1,c)_2\\
 =&\,(-1,-1)_2s_2(f).
\end{split}
\end{equation*}

When one of $a,b,c$ is even, without loss of generality, assume that $c=2\tilde{c}$ is even.
By condition {\bf(B)}, there are integers $\tilde{u},\tilde{v},\tilde{w}$ such that
\begin{equation*}
-ab\equiv\tilde{u}^2~(\bmod~\tilde{c}),\,-b2\tilde{c}\equiv\tilde{v}^2~(\bmod-a)\,\,\text{and}\,-\tilde{c}2a\equiv\tilde{w}^2~(\bmod-b).
\end{equation*}
These congruence relations imply that
\begin{equation*}
\begin{split}
&\Big(\frac{-1}{\tilde{c}}\Big)\Big(\frac{-a}{\tilde{c}}\Big)\Big(\frac{-b}{\tilde{c}}\Big)=\Big(\frac{\tilde{u}^2}{\tilde{c}}\Big)=1,\\
&\Big(\frac{-b}{-a}\Big)\Big(\frac{2}{-a}\Big)\Big(\frac{\tilde{c}}{-a}\Big)=\Big(\frac{\tilde{v}^2}{-a}\Big)=1\\
&\,\text{and}\,\,\Big(\frac{-a}{-b}\Big)\Big(\frac{2}{-b}\Big)\Big(\frac{\tilde{c}}{-b}\Big)=\Big(\frac{\tilde{w}^2}{-b}\Big)=1.
\end{split}
\end{equation*}
Multiplying both sides of these three equations, it follows that
\begin{equation*}
\Big(\frac{-1}{\tilde{c}}\Big)\Big(\frac{2}{-a}\Big)\Big(\frac{2}{-b}\Big)\Big(\frac{-a}{-b}\Big)\Big(\frac{-b}{-a}\Big)
\Big(\frac{-a}{\tilde{c}}\Big)\Big(\frac{\tilde{c}}{-a}\Big)\Big(\frac{-b}{\tilde{c}}\Big)\Big(\frac{\tilde{c}}{-b}\Big)=1,
\end{equation*}
which leads to
\begin{equation*}
\begin{split}
1=&\,(\tilde{c},-1)_2(-a, 2)_2(-b, 2)_2(-a,-b)_2(-a,\tilde{c})_2(-b,\tilde{c})_2\\
 =&\,(2\tilde{c},-1)_2(-1,-1)_2(a,-1)_2(b,-1)_2(a,b)_2(-a,2\tilde{c})_2(-b,2\tilde{c})_2\\
 =&\,(-1, -1)_2(c,-1)_2(a,-1)_2(b,-1)_2(a,b)_2(a,c)_2(b,c)_2\\
 =&\,(-1,-1)_2s_2(f).
\end{split}
\end{equation*}

Therefore, $s_2(f)=-1=(-1,-1)_2$ and our proof is concluded.
\end{proof}

\section{Examples}\label{Sec:Exm}
In this section, we would like to use examples to show how one may apply Legendre's theorem to determine the existence of rational points on a given conic.

\begin{exm}
\begin{equation}\label{E4}
3X^2-12XY+17Y^2+18X-56Y+45=0.
\end{equation}
{\rm Step 1:} Replace $X$ by $\frac{X}{Z}$ and $Y$ by $\frac{Y}{Z}$ to get
\begin{equation*}
3X^2-12XY+17Y^2+18XZ-56YZ+45Z^2=0.
\end{equation*}
{\rm Step 2:} Diagonalize the resulted quadratic form by completing squares to get
\begin{equation*}
\begin{split}
&\,3X^2-12XY+17Y^2+18XZ-56YZ+45Z^2\\
=&\,3\big(X^2-4XY+6XZ+4Y^2-12YZ+9Z^2\big)+5Y^2-20YZ+18Z^2\\
=&\,3(X-2Y+3Z)^2+5\big(Y^2-4YZ+4Z^2\big)-2Z^2\\
=&\,3(X-2Y+3Z)^2+5(Y-2Z)^2-2Z^2.
\end{split}
\end{equation*}
{\rm Step 3:} Apply Legendre's theorem to determine if the equation $3x^2+5y^2-2z^2=0$ has a nonzero solution:
\vskip2pt
\noindent{\sc(i)} $a=3$, $b=5$ and $c=-2$ have different signs;
\vskip2pt
\noindent{\sc(ii)} $-ab=-15\equiv 1~(\bmod~c)$, $-ac=6\equiv 1~(\bmod~b)$ and $-bc=10\equiv 1~(\bmod~a)$ are all quadratic residues.
\vskip2pt
In conclusion, there are rational points on the conic \eqref{E4}.
\end{exm}

\begin{exm}
\begin{equation}\label{E5}
3X^2+6XY-2Y^2-18X-48Y-17=0.
\end{equation}
{\rm Step 1:} Replace $X$ by $\frac{X}{Z}$ and $Y$ by $\frac{Y}{Z}$ to get
\begin{equation*}
3X^2+6XY-2Y^2-18XZ-48YZ-17Z^2=0.
\end{equation*}
{\rm Step 2:} Diagonalize the resulted quadratic form by completing squares to get
\begin{equation*}
\begin{split}
&\,3X^2+6XY-2Y^2-18XZ-48YZ-17Z^2\\
=&\,3\big(X^2+2XY-6XZ+Y^2-6YZ+9Z^2\big)-5Y^2-30YZ-44Z^2\\
=&\,3(X+Y-3Z)^2-5\big(Y^2+6YZ+9Z^2\big)+Z^2\\
=&\,3(X+Y-3Z)^2-5(Y+3Z)^2+Z^2.
\end{split}
\end{equation*}
{\rm Step 3:} Apply Legendre's theorem to determine if the equation $3x^2-5y^2+z^2=0$ has a nonzero solution:
\vskip2pt
\noindent{\sc(i)} $a=3$, $b=-5$ and $c=1$ have different signs;
\vskip2pt
\noindent{\sc(ii)} $-ac=-3\equiv2~(\bmod-5)$, which is not a quadratic residue of $b$.
\vskip2pt
Therefore, there exists no rational point on the conic \eqref{E5}.
\end{exm}


\small

\end{document}